\newtheorem{theorem}{Theorem}[section]
\newtheorem{proposition}[theorem]{Proposition}
\newtheorem{corollary}[theorem]{Corollary}
\newtheorem{conjecture}[theorem]{Conjecture}
\theoremstyle{definition}
\newtheorem{claim}{\indent Claim}
\newtheorem{question}{Question}
\title{Three-coloring triangle-free graphs without long forbidden paths}
\author[a]{Yidong Zhou}
\author[b]{Jorik Jooken}
\author[a]{Baoyuan Shan}
\author[b,c]{Jan Goedgebeur}
\author[d]{Shenwei Huang\thanks{Corresponding author: shenweihuang@nankai.edu.cn.}}
\affil[a]{College of Computer Science, Nankai University, Tianjin 300350, China}
\affil[b]{Department of Computer Science, KU Leuven Campus Kulak-Kortrijk, 8500 Kortrijk, Belgium}
\affil[c]{Department of Mathematics, Computer Science and Statistics, Ghent University, 9000 Ghent, Belgium}
\affil[d]{School of Mathematical Sciences and LPMC, Nankai University, Tianjin 300071, China}
\begin{document}

\maketitle

\begin{abstract}
A graph $G$ is $k$-vertex-critical if $\chi(G)=k$, but $\chi(G')<k$ for every proper induced subgraph $G'$ of $G$. For a family of graphs $\mathcal{F}$, $G$ is $\mathcal{F}$-free if no graph $F \in \mathcal{F}$ is an induced subgraph of $G$. We show that there are exactly three 4-vertex-critical $\{P_7,C_3\}$-free graphs containing an induced $C_7$, thereby settling the first of the two cases of a conjecture by Goedgebeur and Schaudt [J.~Graph Theory, 87:188--207, 2018]. Moreover, we show that all $\{P_5+P_1,C_3\}$-free graphs are $3$-colorable and by combining our result with known results from the literature, we completely characterize the maximum chromatic number of $\{F,C_3\}$-free graphs if $F$ is a six-vertex induced subgraph of $P_7$. Finally, we construct an infinite family of $4$-vertex-critical $\{4K_2,C_3\}$-free graphs. These graphs are also $\{P_{11},C_3\}$-free and this is the first value of $t$ for which an infinite family of $4$-vertex-critical $\{P_{t},C_3\}$-free graphs is known.
\end{abstract}

\textbf{Keywords}:
Graph coloring; Hereditary graph classes; Vertex-critical graphs

\section{Introduction}
Given a graph $G$ and an integer $k$, we say that $G$ admits a \textit{proper $k$-coloring} (or alternatively, $G$ is \textit{$k$-colorable}) if it is possible to assign a label from the set $\{1,2,\ldots,k\}$ to each vertex in $G$ such that neighboring vertices receive a different label. The \textit{chromatic number $\chi(G)$} is the smallest integer $k$ such that $G$ admits a proper $k$-coloring. For a fixed integer $k$, the $k$-coloring problem receives as input a graph $G$ and asks whether $G$ admits a proper $k$-coloring. It is well known that for any $k \geq 3$ the $k$-coloring problem is NP-complete~\cite{K72}. This motivates the study of this problem under additional restrictions on the input graphs. In this setting, one is interested in investigating which restrictions make the problem tractable. For graphs $G$ and $F$, we say that $F$ is an \textit{induced subgraph} of $G$ if $F$ can be obtained by removing zero or more vertices from $G$. We call $F$ a \textit{proper induced subgraph} of $G$ if $F$ is an induced subgraph of $G$ different from $G$ itself. If $F$ is not an induced subgraph of $G$, we say that $G$ is \textit{$F$-free}. Similarly, for a family of graphs $\mathcal{F}$, we say that $G$ is \textit{$\mathcal{F}$-free} if $G$ is $F$-free for each $F \in \mathcal{F}$. A popular restriction in the literature is to only consider input graphs that are $\mathcal{F}$-free for some fixed family of graphs $\mathcal{F}$, see e.g.\ the survey~\cite{GJPS17}. For an integer $t$, the graph $P_t$ denotes a path on $t$ vertices, whereas the graph $C_t$ denotes a cycle on $t$ vertices. For graphs $G$ and $H$, the graph $G+H$ denotes the disjoint union of $G$ and $H$, whereas for an integer $\ell \geq 1$ the graph $\ell G$ denotes the disjoint union of $\ell$ copies of $G$.

If $\mathcal{F}$ contains exactly one graph $F$, it is known that for each integer $k \geq 3$, the $k$-coloring problem restricted to $F$-free graphs remains NP-complete unless $F$ is a disjoint union of paths~\cite{EHK98,HI81,LD83}. The case where $F$ is connected (i.e., $F$ is a path) received the most attention. Huang \cite{HS13} proved that the 4-coloring problem for $P_7$-free graphs and the 5-coloring problem for $P_6$-free graphs are both NP-complete. On the other hand, Ho\`{a}ng, Kami\'{n}ski, Lozin, Sawada and Shu \cite{HKLSS10} proved that the $k$-coloring problem for $P_5$-free graphs can be solved in polynomial time for any fixed $k\geq 5$, whereas Chudnovsky, Spirkl and Zhong \cite{CM24,CM241} showed that the $4$-coloring problem for $P_6$-free graphs can be solved in polynomial time. By combining these results, one obtains a complete classification of the complexity of $k$-coloring $P_t$-free graphs for any fixed $k\geq 4$ and $t \geq 1$. 
For $k=3$, the strongest known result is due to Bonomo, Chudnovsky, Maceli, Schaudt, Stein and Zhong \cite{BCMSSZ18} who proved that the 3-coloring problem for $P_7$-free graphs can be solved in polynomial time.

A more fine-grained view on the cases that can be solved in polynomial time can be obtained through the lens of \textit{$k$-vertex-critical graphs}, which we will discuss next. A graph $G$ is called $k$-vertex-critical if $\chi(G)=k$, but $\chi(G')<k$ for every proper induced subgraph $G'$ of $G$. For example, the family of all $3$-vertex-critical graphs is the family consisting of all cycles of odd length. For any family of graphs $\mathcal{F}$, a folklore result is that if the family $\mathcal{S}_{\mathcal{F},k}$ consisting of all $k$-vertex-critical $\mathcal{F}$-free graphs is a finite family, then the $(k-1)$-coloring problem on $\mathcal{F}$-free graphs can be solved in polynomial time. Indeed, to determine whether an $\mathcal{F}$-free graph $G$ is $(k-1)$-colorable or not, it suffices to iterate over each $k$-vertex-critcal graph $G' \in \mathcal{S}_{\mathcal{F},k}$ and determine whether $G'$ is an induced subgraph of $G$ or not. If $\mathcal{S}_{\mathcal{F},k}$ is a finite family, the order of each graph $G' \in \mathcal{S}_{\mathcal{F},k}$ is bounded by some constant and therefore this can be done in polynomial time. If some $G' \in \mathcal{S}_{\mathcal{F},k}$ occurs as an induced subgraph of $G$, the algorithm returns that $G$ is not $(k-1)$-colorable and can also return $G'$ as a proof that certifies that $G$ is not $(k-1)$-colorable and otherwise the algorithm can safely return that $G$ must be $(k-1)$-colorable. Hence, the statement that $\mathcal{S}_{\mathcal{F},k}$ is a finite family is at least as strong as the statement that the $(k-1)$-coloring problem on $\mathcal{F}$-free graphs can be solved in polynomial time, since the latter statement does not guarantee that the algorithm can also provide a certificate explaining why the input graph is not $(k-1)$-colorable.

Chudnovsky, Goedgebeur, Schaudt and Zhong~\cite{CGSZ20b} showed that there are only finitely many $4$-vertex-critical $P_6$-free graphs (and give the complete list of $80$ such graphs), but infinitely many $4$-vertex-critical $P_7$-free graphs. The same authors later complemented this result by presenting a complete characterisation of when there are only finitely many 4-vertex-critical $\mathcal{F}$-free graphs if $\mathcal{F}$ contains exactly one graph $F$: there are only finitely many 4-vertex-critical $F$-free graphs if and only if $F$ is an induced subgraph of $P_6$, $2P_3$ or $P_4+\ell P_1$ for some integer $\ell \geq 1$~\cite{CGSZ20a}. This result led several authors to investigate the case when $\mathcal{F}$ contains two graphs, namely a path and another graph. Bonomo-Braberman et al.~\cite{BEtAl21} showed that there are only finitely many 4-vertex-critical $\{P_7,C_3\}$-free graphs (more precisely, the order of such a graph is smaller than 201\,326\,592), while Goedgebeur and Schaudt~\cite{GS18} made the following more precise conjecture.
\begin{conjecture}[Goedgebeur and Schaudt~\cite{GS18}]
\label{conj:GS}
    There are exactly seven 4-vertex-critical $\{P_7,C_3\}$-free graphs.
\end{conjecture}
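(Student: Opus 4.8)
The plan is to reduce the conjecture to a tightly constrained structural analysis around a short induced cycle. First I would record the basic shape of the cycle structure: since $G$ is $C_3$-free and since $C_t$ contains an induced $P_7$ for every $t \ge 8$ (delete one vertex of $C_t$ to expose seven consecutive vertices, whose two endpoints are non-adjacent), every induced cycle of a $\{P_7,C_3\}$-free graph has length in $\{4,5,6,7\}$. A $4$-vertex-critical graph is not bipartite, so it contains a shortest odd cycle, which is induced and, being triangle-free, has length $5$ or $7$. Hence every $4$-vertex-critical $\{P_7,C_3\}$-free graph contains an induced $C_5$ or an induced $C_7$. Alongside this I would assemble the standard criticality toolkit: $\delta(G)\ge 3$; no two non-adjacent vertices $u,v$ satisfy $N(u)\subseteq N(v)$ (otherwise recolouring $u$ with the colour of $v$ would $3$-colour $G$); and, by triangle-freeness, $N(v)$ is an independent set for every $v$, so a vertex outside a fixed induced cycle $C$ has an independent neighbourhood on $C$.

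The natural case split is on whether $G$ contains an induced $C_7$. In the first case I would invoke the main theorem of this paper (stated in the abstract): there are exactly three $4$-vertex-critical $\{P_7,C_3\}$-free graphs containing an induced $C_7$. Structurally this is proved by fixing an induced $C_7=c_1\cdots c_7c_1$, partitioning $V(G)\setminus V(C_7)$ according to the (independent, hence size-$\le 3$) trace $N(x)\cap V(C_7)$ of each external vertex, and using $P_7$-freeness to bound both the number of attachment types and the way vertices of successive distance layers from $C_7$ may interact; criticality (in particular $\delta(G)\ge3$ and the incomparability of neighbourhoods) then forces the global graph to be one of three explicit graphs.

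For the remaining case $G$ is $\{P_7,C_3,C_7\}$-free and therefore contains an induced $C_5=c_1\cdots c_5c_1$. I would run the analogous layered analysis anchored at this $C_5$: classify external vertices by their trace on the $C_5$ (an independent set, so of size at most $2$), show via $P_7$-freeness that the breadth-first layers around $C_5$ terminate after a bounded depth, and bound the size of each layer using triangle-freeness together with the no-comparable-neighbourhood property. Pushing this through should leave exactly four admissible graphs, including the Gr\"otzsch graph (the smallest triangle-free $4$-chromatic graph), bringing the total to seven, since the two cases are disjoint and exhaustive. Finally I would verify directly that each of the seven candidates is $\{P_7,C_3\}$-free, has chromatic number $4$, and becomes $3$-colourable after deleting any single vertex, so that the list is both complete and correct.

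The main obstacle I anticipate is the exhaustiveness of the case analysis in the no-$C_7$ branch. The enormous order bound $201\,326\,592$ from Bonomo-Braberman et al.~\cite{BEtAl21} rules out naive exhaustive generation, so everything hinges on proving that the layered neighbourhood structure around $C_5$ really does collapse to a short, wide-but-bounded configuration: one must argue that a long induced path cannot be avoided once too many layers or too many incomparable attachment types are present, and simultaneously that $4$-criticality is destroyed whenever the graph grows beyond the intended candidates. Keeping this dichotomy airtight, neither overlooking a genuine critical graph nor admitting a spurious large one, while taming the combinatorial branching of attachment patterns, is where the real work lies; a carefully orchestrated interplay of the $P_7$-free distance bound, the triangle-free independence of neighbourhoods, and the recolouring and incomparability consequences of criticality is what I would rely on to close it.
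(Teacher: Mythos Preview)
The statement you are attempting to prove is labelled in the paper as a \emph{conjecture}, and the paper does \emph{not} prove it. The paper settles only the $C_7$ branch (Theorem~\ref{thm:contain C_7}): there are exactly three $4$-vertex-critical $\{P_7,C_3\}$-free graphs containing an induced $C_7$. The complementary case---$4$-vertex-critical $\{P_7,C_3,C_7\}$-free graphs, which must contain an induced $C_5$---is left open. Your proposal mirrors this split and correctly cites the paper's theorem for the $C_7$ side, but for the $C_5$ side you offer only a programmatic sketch (``layered analysis anchored at this $C_5$ \ldots\ should leave exactly four admissible graphs'') with no concrete bounds or case distinctions. You yourself flag this as ``the main obstacle,'' and indeed it is: this is precisely the open half of Conjecture~\ref{conj:GS}, and nothing in your outline goes beyond what was already known. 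So the proposal is not a proof; it is a restatement of the problem's natural dichotomy together with a hope that the $C_5$ branch will close.

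Even restricted to the $C_7$ branch, your description of the method diverges from what the paper actually does. The paper's argument is partly computer-assisted: after classifying attachments to the $C_7$ into sets $S_0,S_1,S_2,S_3$, it runs Algorithm~\ref{algo:genGraphs} on two small seed graphs $L_1$ (a $C_7$ plus one $S_2$-vertex) and $L_2$ (a $C_7$ plus an $S_3$-vertex with an $S_0$ neighbour) to eliminate $S_2$ and $S_0$ entirely, and only then carries out a structural argument built around a maximal ``heptagram'' to exhibit an explicit $3$-colouring. Your proposal suggests a purely structural layered analysis with $\delta(G)\ge 3$ and incomparable neighbourhoods doing the heavy lifting; that is a plausible alternative strategy, but it is not what the paper does, and you give no indication of how you would bypass the computer search that the authors found necessary.
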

The seven graphs from Conjecture~\ref{conj:GS} are shown in Fig.~\ref{fig:7Graphs}.

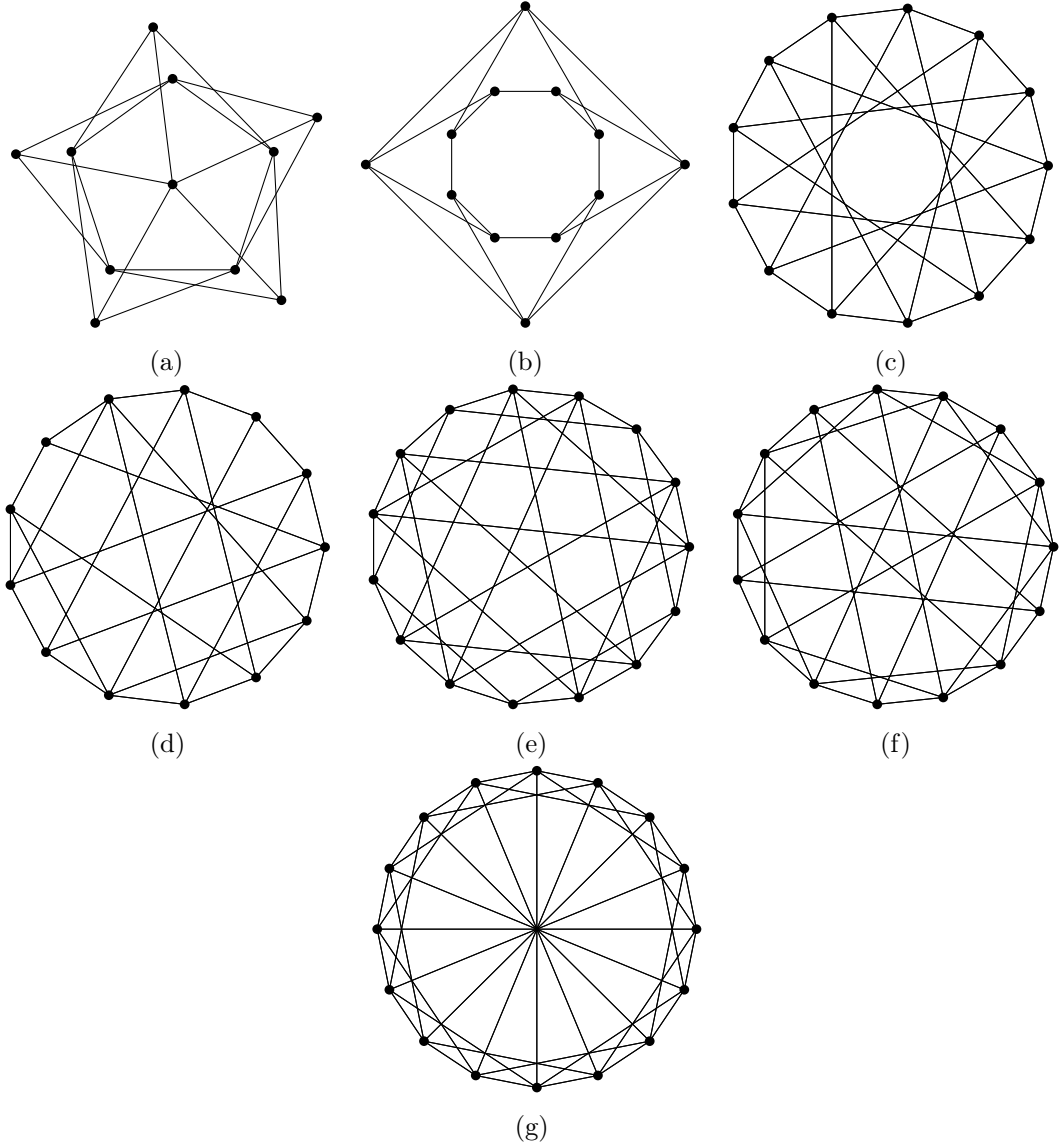
\begin{figure}[h!]
\begin{center}
\subcaptionbox{}{\begin{tikzpicture}[scale=0.7]
  \def\sides{5}
  \def\radiusA{2}
  \def\radiusB{3}

  \fill (0,0) circle (2.61pt);  
  \foreach \i in {1,...,\sides} {
    \fill ({360/\sides * \i+18}:\radiusA) circle (2.61pt);
    \fill ({360/\sides * \i+25}:\radiusB) circle (2.61pt);

    \draw ({360/\sides * (\i + 1)+18}:\radiusA) -- ({360/\sides * \i+18}:\radiusA);

    \draw ({360/\sides * (\i + 1)+25}:\radiusB) -- ({360/\sides * \i+18}:\radiusA);
    \draw ({360/\sides * (\i - 1)+25}:\radiusB) -- ({360/\sides * \i+18}:\radiusA);

    \draw ({360/\sides * (\i)+25}:\radiusB) -- (0,0);
  }
  
\end{tikzpicture}} \quad
\subcaptionbox{}{\begin{tikzpicture}[scale=0.7]
  \def\sidesA{8}
  \def\sidesB{4}
  \def\radiusA{1.5}
  \def\radiusB{3}

  \foreach \i in {1,...,\sidesA} {
    \fill ({360/\sidesA * \i+22.5}:\radiusA) circle (2.61pt);

    \draw ({360/\sidesA * (\i + 1)+22.5}:\radiusA) -- ({360/\sidesA * \i+22.5}:\radiusA);

  }

  \foreach \i in {1,...,\sidesB} {
    \fill ({360/\sidesB * \i}:\radiusB) circle (2.61pt);

    \draw ({360/\sidesB * (\i + 1)}:\radiusB) -- ({360/\sidesB * \i}:\radiusB);

    \draw ({360/\sidesA * (2*\i + 6)+22.5}:\radiusA) -- ({360/\sidesB * \i}:\radiusB);
    \draw ({360/\sidesA * (2*\i + 1)+22.5}:\radiusA) -- ({360/\sidesB * \i}:\radiusB);

  }
\end{tikzpicture}} \quad
\subcaptionbox{}{\begin{tikzpicture}[scale=0.7]
  \def\sides{13}
  \def\radius{3}

  \foreach \i in {1,...,\sides} {
    \fill ({360/\sides * \i}:\radius) circle (2.61pt);

    \draw ({360/\sides * (\i + 1)}:\radius) -- ({360/\sides * \i}:\radius);

    \draw ({360/\sides * (\i + 5)}:\radius) -- ({360/\sides * \i}:\radius);

    \draw ({360/\sides * (\i + 8)}:\radius) -- ({360/\sides * \i}:\radius);

    \draw ({360/\sides * (\i + 12)}:\radius) -- ({360/\sides * \i}:\radius);
  }
\end{tikzpicture}} \quad
\subcaptionbox{}{\begin{tikzpicture}[scale=0.7]
  \def\sides{13}
  \def\radius{3}

  \foreach \i in {1,...,\sides} {
    \fill ({360/\sides * \i}:\radius) circle (2.61pt);

    \draw ({360/\sides * (\i + 1)}:\radius) -- ({360/\sides * \i}:\radius);

    \draw ({360/\sides * (\i + 12)}:\radius) -- ({360/\sides * \i}:\radius);
  }

  \foreach \i in {1} {
    \draw ({360/\sides * (\i + 6)}:\radius) -- ({360/\sides * \i}:\radius);

    \draw ({360/\sides * (\i + 9)}:\radius) -- ({360/\sides * \i}:\radius);
  }
  \foreach \i in {2} {
    \draw ({360/\sides * (\i + 7)}:\radius) -- ({360/\sides * \i}:\radius);
  }
  \foreach \i in {3,8,11,13} {
    \draw ({360/\sides * (\i + 5)}:\radius) -- ({360/\sides * \i}:\radius);

    \draw ({360/\sides * (\i + 8)}:\radius) -- ({360/\sides * \i}:\radius);
  }
  \foreach \i in {4} {
    \draw ({360/\sides * (\i + 3)}:\radius) -- ({360/\sides * \i}:\radius);

    \draw ({360/\sides * (\i + 6)}:\radius) -- ({360/\sides * \i}:\radius);
    
    \draw ({360/\sides * (\i + 8)}:\radius) -- ({360/\sides * \i}:\radius);
  }
  \foreach \i in {5} {
    \draw ({360/\sides * (\i + 8)}:\radius) -- ({360/\sides * \i}:\radius);
  }
  \foreach \i in {6} {
    \draw ({360/\sides * (\i + 3)}:\radius) -- ({360/\sides * \i}:\radius);

    \draw ({360/\sides * (\i + 5)}:\radius) -- ({360/\sides * \i}:\radius);
  }
  \foreach \i in {7} {
    \draw ({360/\sides * (\i + 7)}:\radius) -- ({360/\sides * \i}:\radius);

    \draw ({360/\sides * (\i + 10)}:\radius) -- ({360/\sides * \i}:\radius);
  }
  \foreach \i in {9} {
    \draw ({360/\sides * (\i + 3)}:\radius) -- ({360/\sides * \i}:\radius);

    \draw ({360/\sides * (\i + 6)}:\radius) -- ({360/\sides * \i}:\radius);
    
    \draw ({360/\sides * (\i + 10)}:\radius) -- ({360/\sides * \i}:\radius);
  }
  \foreach \i in {10} {
    \draw ({360/\sides * (\i + 4)}:\radius) -- ({360/\sides * \i}:\radius);

    \draw ({360/\sides * (\i + 7)}:\radius) -- ({360/\sides * \i}:\radius);
  }
  \foreach \i in {12} {
    \draw ({360/\sides * (\i + 5)}:\radius) -- ({360/\sides * \i}:\radius);

    \draw ({360/\sides * (\i + 10)}:\radius) -- ({360/\sides * \i}:\radius);
  }
\end{tikzpicture}} \quad
\subcaptionbox{}{\begin{tikzpicture}[scale=0.7]
  \def\sides{15}
  \def\radius{3}

  \foreach \i in {1,...,\sides} {
    \fill ({360/\sides * \i}:\radius) circle (2.61pt);

    \draw ({360/\sides * (\i + 1)}:\radius) -- ({360/\sides * \i}:\radius);

    \draw ({360/\sides * (\i + 14)}:\radius) -- ({360/\sides * \i}:\radius);
  }

  \foreach \i in {1,4,...,\sides} {
    \draw ({360/\sides * (\i + 5)}:\radius) -- ({360/\sides * \i}:\radius);

    \draw ({360/\sides * (\i + 8)}:\radius) -- ({360/\sides * \i}:\radius);

    \draw ({360/\sides * (\i + 11)}:\radius) -- ({360/\sides * \i}:\radius);
  }

  \foreach \i in {2,5,...,\sides} {
    \draw ({360/\sides * (\i + 3)}:\radius) -- ({360/\sides * \i}:\radius);

    \draw ({360/\sides * (\i + 12)}:\radius) -- ({360/\sides * \i}:\radius);
  }

    \foreach \i in {3,6,...,\sides} {
    \draw ({360/\sides * (\i + 4)}:\radius) -- ({360/\sides * \i}:\radius);

    \draw ({360/\sides * (\i + 7)}:\radius) -- ({360/\sides * \i}:\radius);

    \draw ({360/\sides * (\i + 10)}:\radius) -- ({360/\sides * \i}:\radius);
  }
\end{tikzpicture}} \quad
\subcaptionbox{}{\begin{tikzpicture}[scale=0.7]
  \def\sides{15}
  \def\radius{3}

  \foreach \i in {1,...,\sides} {
    \fill ({360/\sides * \i}:\radius) circle (2.61pt);

    \draw ({360/\sides * (\i + 1)}:\radius) -- ({360/\sides * \i}:\radius);

    \draw ({360/\sides * (\i + 14)}:\radius) -- ({360/\sides * \i}:\radius);
  }

  \foreach \i in {1,4,...,\sides} {
    \draw ({360/\sides * (\i + 3)}:\radius) -- ({360/\sides * \i}:\radius);

    \draw ({360/\sides * (\i + 8)}:\radius) -- ({360/\sides * \i}:\radius);

    \draw ({360/\sides * (\i + 12)}:\radius) -- ({360/\sides * \i}:\radius);
  }

  \foreach \i in {2,5,...,\sides} {
    \draw ({360/\sides * (\i + 6)}:\radius) -- ({360/\sides * \i}:\radius);

    \draw ({360/\sides * (\i + 9)}:\radius) -- ({360/\sides * \i}:\radius);
  }

    \foreach \i in {3,6,...,\sides} {
    \draw ({360/\sides * (\i + 3)}:\radius) -- ({360/\sides * \i}:\radius);

    \draw ({360/\sides * (\i + 7)}:\radius) -- ({360/\sides * \i}:\radius);

    \draw ({360/\sides * (\i + 12)}:\radius) -- ({360/\sides * \i}:\radius);
  }
\end{tikzpicture}} \quad
\subcaptionbox{}{
\begin{tikzpicture}[scale=0.7]
  \def\sides{16}
  \def\radius{3}

  \foreach \i in {1,...,\sides} {
    \fill ({360/\sides * \i}:\radius) circle (2.61pt);

    \draw ({360/\sides * (\i + 1)}:\radius) -- ({360/\sides * \i}:\radius);

    \draw ({360/\sides * (\i + 3)}:\radius) -- ({360/\sides * \i}:\radius);
    \draw ({360/\sides * (\i + 8)}:\radius) -- ({360/\sides * \i}:\radius);
    \draw ({360/\sides * (\i + 13)}:\radius) -- ({360/\sides * \i}:\radius);
    
    \draw ({360/\sides * (\i + 15)}:\radius) -- ({360/\sides * \i}:\radius);
  }

\end{tikzpicture}}
\end{center}
\caption{The seven 4-vertex-critical $\{P_7,C_3\}$-free graphs from Conjecture~\ref{conj:GS}.}\label{fig:7Graphs} 
\end{figure}

Since every 4-vertex-critical $\{P_7,C_3\}$-free graph is non-bipartite, it must contain an induced $C_5$ or an induced $C_7$. The first contribution of the current paper is that we make progress on Conjecture~\ref{conj:GS} by settling one of the two cases: we show that there are exactly three 4-vertex-critical $\{P_7,C_3\}$-free graphs containing an induced $C_7$, namely graphs (d), (e) and (f) from Fig.~\ref{fig:7Graphs}. This result is partially computer-assisted. We describe the key algorithm that was used in Section~\ref{sec:algo} and the theoretical proof in which some parts rely on this algorithm in Section~\ref{sec:inducedC7}. 

Related to this, several authors have also focused on the case where $P_7$ is replaced by a six-vertex induced subgraph of $P_7$ (i.e., $P_6$, $P_5+P_1$, $P_4+P_2$ or $2P_3$). As we mentioned before, there are only finitely many 4-vertex-critical $2P_3$-free graphs~\cite{CGSZ20a} and finitely many 4-vertex-critical $P_6$-free graphs~\cite{CGSZ20b}. Pyatkin~\cite{P13} and Randerath, Schiermeyer and Tewes~\cite{RST02} showed that $\chi(G) \leq 4$ for every $\{2P_3,C_3\}$-free and every $\{P_6,C_3\}$-free graph $G$, respectively. Recently, Chen, Wu and Zhang~\cite{CWZ25} showed that there is exactly one 4-vertex-critical $\{P_4+P_2,C_3\}$-free graph and that $\chi(G) \leq 4$ for every $\{P_4+P_2,C_3\}$-free graph $G$. In Section~\ref{sec:inducedC7}, we present the second contribution of the current paper where we solve the last remaining case (thereby obtaining a complete characterization for the $\{F,C_3\}$-free case when $F$ is a six-vertex induced subgraph of $P_7$) by showing that $\chi(G) \leq 3$ for every $\{P_5+P_1,C_3\}$-free graph $G$ (and thus, there are no 4-vertex-critical $\{P_5+P_1,C_3\}$-free graphs).

The \textit{girth} of a graph $G$ is the length of its shortest cycle (or $\infty$ if $G$ has no cycles). Hence, graphs with girth at least $g$ correspond with $\{C_3,C_4,\ldots,C_{g-1}\}$-free graphs, which generalizes the $C_3$-free setting we considered so far. For any positive integer $t$, let $g(t)$ be the minimum positive integer such that there are only finitely many 4-vertex-critical $P_t$-free graphs with girth at least $g(t)$. The results from Kami\'{n}ski and Pstrucha~\cite{KP19} imply that for all $t \geq 1$, we have $g(t) \leq 5$. There are only few values of $t$ for which the exact value of $g(t)$ is known. From the results mentioned in our previous discussion, it follows that $g(t)=3$ when $1 \leq t \leq 6$~\cite{CGSZ20b} and $g(7)=4$~\cite{CGSZ20b,BEtAl21}. In Section~\ref{sec:infFam} we present the third contribution of the current paper and construct an infinite family of 4-vertex-critical $\{4K_2, C_3\}$-free graphs (note that these graphs are also $\{P_{11}, C_3\}$-free). This is the first infinite family of 4-vertex-critical $\{P_t, C_3\}$-free graphs for any value of $t$. Hence, by combining our result with the result by Kami\'{n}ski and Pstrucha~\cite{KP19}, we show that $g(t)=5$ for all $t \geq 11$.
\subsection{Further notation}

For an integer $n \geq 1$, we write $[n]$ to denote the set $\{1,2,\ldots,n\}$. Let $G$ be a graph and $A, B \subseteq V(G)$ be two sets containing vertices of $G$. We write $N_G(A) := \{u \in V(G) \setminus A~|~ \exists v \in A\text{ with }uv\in E(G)\}$ (or $N(A)$ if $G$ is clear from the context) to denote all neighbors of vertices in $A$ that lie outside $A$. We say that a vertex $v \in V(G)$ is complete to $A$ if $v$ is adjacent to every vertex in $A$. We say that $A$ is complete to $B$ if every vertex $v \in A$ is complete to $B$. Similarly, we say that a vertex $v \in V(G)$ is anti-complete to $A$ if $v$ is not adjacent to any vertex in $A$ and $A$ is anti-complete to $B$ if every vertex $v \in A$ is anti-complete to $B$. The notation $G[A]$ denotes the graph induced by the vertices in $A$. In several places in the paper, we consider labeled objects (such as vertices) $v_1, v_2, \ldots, v_p$. Whenever we use indices, they should be interpreted in a cyclic fashion (i.e., such that $v_i = v_{i+p}$ for all $i \in \mathbb{Z}$).

\section{Graph generation algorithm}\label{sec:algo}
The results in Section~\ref{sec:inducedC7} were partially obtained with the help of a computer. This is a popular trend in the study of $k$-vertex-critical $\mathcal{F}$-free graphs, see for example~\cite{J25}. More precisely, the key algorithm that we use is based on the algorithm that was initially proposed in~\cite{HKLSS10} and later improved in a series of papers by introducing additional pruning rules, expansion rules, heuristics and generalizing several concepts (see~\cite{GS18,GJORS24,XJGH23,XJGH24}). For completeness, we will explain the main ideas of this algorithm in the current section, but we refer the interested reader to the aforementioned references for additional details (e.g.\ the formal correctness proof and details on the various sanity checks that were used to help verify that the implementation does not contain any bugs). We make the source code of the precise version of the algorithm that was used in the current paper publicly available at~\cite{GitHub}.

The algorithm is a graph generation algorithm that recursively generates all $k$-vertex-critical $\mathcal{F}$-free graphs on at most $n$ vertices that contain the graph $H$ as an induced subgraph (where $k$, $n$, $\mathcal{F}$ and $H$ are parameters of the algorithm). The pseudocode of the algorithm is given in Algorithm~\ref{algo:genGraphs}. In each recursion step, the algorithm will select an expansion rule and expand the current graph $G$ by adding a single vertex $u$ and adding edges between $u$ and vertices in $V(G)$ in all possible ways that are permitted by the expansion rule. For example, if $G$ is an induced subgraph of a $k$-vertex-critical graph $G'$ and there is a vertex $v \in V(G)$ with strictly fewer than $k-1$ neighbors in $V(G)$, then we know that $v$ must have at least one more neighbor in $V(G') \setminus V(G)$, since every vertex in a $k$-vertex-critical graph has degree at least $k-1$. Therefore, one possible expansion rule could be to choose a vertex $v \in V(G)$ with strictly fewer than $k-1$ neighbors in $V(G)$ and add a vertex $u$ and edges between $u$ and vertices in $V(G)$ in all possible ways such that $v$ and $u$ are adjacent. Apart from expansion rules, the algorithm also uses pruning rules that allow the algorithm to backtrack without having to expand the current graph. For example, if the current graph is not $\mathcal{F}$-free, then it can clearly not occur as an induced subgraph of an $\mathcal{F}$-free $k$-vertex-critical graph and therefore the algorithm does not have to expand the current graph. It is important to note that for certain parameters $k$, $n$, $\mathcal{F}$ and $H$, the algorithm will be able to prune all graphs with strictly fewer than $n$ vertices from the search space. In such cases, one can conclude that there are only finitely many $\mathcal{F}$-free $k$-vertex-critical graphs that contain $H$ as an induced subgraph and we will repeatedly rely on this observation. In the current paper, we will set $n$ to be infinity and then one can conclude that there are finitely many $\mathcal{F}$-free $k$-vertex-critical graphs that contain $H$ as an induced subgraph if the algorithm terminates when called with the parameters $k$, $n=\infty$, $\mathcal{F}$ and $H$. The algorithm uses several such expansion rules and pruning rules (that are often much more complicated than the simple rules we just discussed). For brevity, we do not explain all of these rules in the current paper, but instead refer the interested reader to the references described in the first paragraph of this section, where these rules were originally introduced and discussed in detail.

\begin{algorithm}[ht!b]
\caption{generateGraphs(Integer $k$, Integer $n$, Set of graphs $\mathcal{F}$, Induced graph $H$)}
\label{algo:genGraphs}
  \begin{algorithmic}[1]
		\STATE // Generate and output all $k$-vertex-critical $\mathcal{F}$-free graphs on at most $n$ vertices that contain $H$ as an induced subgraph.
        \STATE // Return the largest order of a graph that the algorithm was not able to prune. If this is strictly smaller than $n$, this means that there are only finitely many $k$-vertex-critical $\mathcal{F}$-free graphs that contain $H$ as an induced subgraph.
        \IF{$H$ can be pruned by one of the pruning rules}
            \STATE return $|V(H)|-1$
        \ENDIF
        
        \STATE // Since $H$ was not pruned, we may now assume that $H$ is $\mathcal{F}$-free
        \STATE // $H$ might be a proper induced subgraph of a $k$-vertex-critical $\mathcal{F}$-free graph in case $H$ admits a proper $(k-1)$-coloring
        \IF{$H$ admits a proper $(k-1)$-coloring}
            \STATE $\text{largestOrder} \gets |V(H)|$

            \IF{$|V(H)|<n$}
                \STATE $R \gets \text{chooseExpansionRule()}$
                \FOR{every graph $H'$ obtained by adding a vertex $u$ to $H$ and edges between $u$ and vertices in $V(H)$ in all possible ways permitted by the expansion rule $R$}
                    \STATE $\text{largestOrder} \gets \max(\text{largestOrder},\text{generateGraphs}(k,n,H',\mathcal{F}))$
                \ENDFOR
            \ENDIF
            
            \STATE return $\text{largestOrder}$
        \ELSE
            \IF{$H$ is $k$-vertex-critical}
                \STATE Output $H$
            \ENDIF
            \STATE return $|V(H)|$
        \ENDIF

  \end{algorithmic}
\end{algorithm}

\section{4-vertex-critical $\{P_7,C_3\}$-free graphs with an induced $C_7$ and 3-colorability of $\{P_5+P_1,C_3\}$-free graphs}\label{sec:inducedC7}

The main theorem of this section characterizes all 4-vertex-critical $\{P_7,C_3\}$-free graphs with an induced $C_7$.
\begin{theorem}\label{thm:contain C_7}
    There are exactly three 4-vertex-critical $\{P_7,C_3\}$-free graphs containing an induced $C_7$, namely graphs (d), (e) and (f) from Fig.~\ref{fig:7Graphs}.
\end{theorem}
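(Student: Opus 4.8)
The plan is to fix an induced copy $C = v_1 v_2 \cdots v_7 v_1$ of $C_7$ in $G$, analyze how the remaining vertices attach to it, and then discharge the finitely many surviving configurations with the generation algorithm of Section~\ref{sec:algo}. First I would record the standard consequences of $4$-vertex-criticality: $G$ is connected and every vertex has degree at least $3$. Since $G$ is triangle-free, for any vertex $u \in V(G) \setminus V(C)$ the set $N(u) \cap V(C)$ is independent in $C$, and hence has size at most $3$ (as $\alpha(C_7) = 3$) and contains no two consecutive $v_i$.

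The key structural step is to use $P_7$-freeness to restrict these attachments. If some $u$ had exactly one neighbor $v_i$ on $C$, then $u\, v_i\, v_{i+1}\, v_{i+2}\, v_{i+3}\, v_{i+4}\, v_{i+5}$ would be an induced $P_7$ (the six consecutive cycle vertices form an induced $P_6$, since $v_i$ and $v_{i+5}$ are at cyclic distance $2$), a contradiction; so every vertex adjacent to $C$ has exactly two or three neighbors on $C$. Up to the dihedral symmetry of $C_7$ there are then only three attachment types, a distance-two pair such as $\{v_1, v_3\}$, a distance-three pair such as $\{v_1, v_4\}$, and an independent triple such as $\{v_1, v_3, v_5\}$, and I would similarly use explicit induced $P_7$'s, built from an off-cycle vertex together with its cycle-neighbors and an arc of $C$, to rule out vertices with no neighbor on $C$ and to constrain how two external vertices of prescribed types may interact. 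The aim of this case analysis is not to finish by hand, but to reduce the problem to a finite list of \emph{seed} graphs $H_1, \ldots, H_m$, each consisting of $C$ together with a bounded number of attached vertices, such that every graph $G$ as in the statement contains some $H_j$ as an induced subgraph.

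With the seeds in hand I would invoke Algorithm~\ref{algo:genGraphs} with parameters $k = 4$, $\mathcal{F} = \{P_7, C_3\}$, $n = \infty$, and $H = H_j$ for each $j$ (or simply $H = C_7$ if that already suffices). Each run that terminates certifies that there are only finitely many $4$-vertex-critical $\{P_7, C_3\}$-free graphs containing that seed, and outputs them all; taking the union over all seeds yields the complete list of $4$-vertex-critical $\{P_7, C_3\}$-free graphs with an induced $C_7$. It then remains to verify that this list consists of exactly the graphs (d), (e) and (f) of Fig.~\ref{fig:7Graphs}, for instance by checking that each of these three graphs is $4$-vertex-critical, triangle-free and $P_7$-free, contains an induced $C_7$, and that no two coincide.

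The main obstacle is guaranteeing termination of the search, i.e.\ proving that no infinite family hides among the admissible configurations. The danger lies entirely with the distance-two and distance-three attachments (and with vertices at distance at least $2$ from $C$): in the absence of the triangle-free or induced-$C_7$ constraints, $P_7$-free graphs are known to admit infinite $4$-vertex-critical families, so the crux is to show that here the combined constraints force these attachment sets to be bounded. The theoretical case analysis must therefore be pushed just far enough that each residual branch falls within reach of the algorithm, and the termination of the algorithm on each seed is exactly the certificate that the remaining configurations cannot proliferate.
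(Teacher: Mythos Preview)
Your opening matches the paper exactly: fix an induced $C_7$, classify attachments, and observe that triangle-freeness plus $P_7$-freeness force every neighbor of $C$ to lie in one of three types (the paper calls these $S_1(i)$, $S_2(i)$, $S_3(i)$, corresponding to your distance-two pair, distance-three pair, and independent triple). The paper also uses the algorithm on seeds, but only two of them: $L_1$, a $C_7$ with a single $S_2$-vertex attached, and $L_2$, a $C_7$ with a single $S_3$-vertex that has an $S_0$-neighbor. These two runs dispose of the cases $S_2\neq\emptyset$ and ``some $S_0$-vertex is adjacent to an $S_3$-vertex,'' after which a short $P_7$-argument kills $S_0$ entirely.

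The gap in your plan is what happens next. Once $S_2=\emptyset$ and $S_0=\emptyset$, so that $V(G)=V(C)\cup S_1\cup S_3$, the paper does \emph{not} hand the remaining case to the algorithm. Instead it introduces a maximal ``heptagram'' $H=\{H_1,\dots,H_7\}$ containing $C$, proves (via two $P_7$-arguments) that consecutive $H_i$'s are mutually complete, classifies the vertices outside $H$ into sets $T_i$ with controlled adjacencies, and then writes down an explicit proper $3$-coloring of $G$. In other words, the heart of the proof is a structural $3$-colorability theorem for this residual class, not a termination certificate from the generator. Your proposal, which aims to ``push the case analysis just far enough that each residual branch falls within reach of the algorithm,'' never identifies seeds for this $S_1\cup S_3$ case, and there is no evidence that suitable seeds exist on which the algorithm terminates; the paper's choice to argue by hand here strongly suggests that feeding $C_7$ (or small extensions of it) directly to Algorithm~\ref{algo:genGraphs} does not finish. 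So the missing idea is precisely the heptagram construction and the explicit $3$-coloring it enables.
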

\begin{proof}
    Suppose for the sake of obtaining a contradiction that $G$ is a 4-vertex-critical $\{P_7,C_3\}$-free graph different from the graphs (d), (e) and (f) from Fig.~\ref{fig:7Graphs} and $G$ contains an induced $C_7$. We will obtain a contradiction by showing that $G$ is 3-colorable. 
    Let $C:=v_1-v_2-v_3-v_4-v_5-v_6-v_7-v_1$ be an induced $C_7$ in $G$. We define the following sets
    \begin{itemize}
        \item $S_0$ is the set of vertices that have no neighbors in $C$; 
        \item For each $i \in [7]$, $S_1(i)$ is the set of vertices whose neighbors in $C$ are exactly $v_{i-1},v_{i+1}$; 
        \item For each $i \in [7]$, $S_2(i)$ is the set of vertices whose neighbors in $C$ are exactly $v_{i-2},v_{i+2}$; 
        \item For each $i \in [7]$, $S_3(i)$ is the set of vertices whose neighbors in $C$ are exactly $v_{i-2},v_i,v_{i+2}$.
    \end{itemize}
    Set $S_j=\bigcup_{i=1}^{7} S_j(i)$ for each $j\in \{1,2,3\}$. 

    \begin{claim}\label{claim:neighbors of C}
        $N(C)=S_1\cup S_2\cup S_3$.
    \end{claim}
    \begin{proof}
        Let $v$ be a vertex in $N(C)$.
        Since $G$ is triangle-free, $|N(v)\cap V(C)|\leq 3$. 
        If $|N(v)\cap V(C)|=1$, say $vv_1\in E(G)$, then $v-v_1-v_2-v_3-v_4-v_5-v_6$ is an induced $P_7$. 
        If $|N(v)\cap V(C)|=2$, then again since $G$ is triangle-free, there is an index $i\in [7]$ such that either $N(v)\cap V(C)=\{v_{i-1},v_{i+1}\}$ or $N(v)\cap V(C)=\{v_{i-2},v_{i+2}\}$, and hence $v\in S_1\cup S_2$. 
        Again using the fact that $G$ is triangle-free, if $|N(v)\cap V(C)|=3$, it follows that there is an index $i\in [7]$ such that $N(v)\cap V(C)=\{v_{i-2},v_i,v_{i+2}\}$ and hence $v\in S_3$. 
        This completes the proof of Claim \ref{claim:neighbors of C}.
    \end{proof}

    \begin{figure}[h!]
\begin{center}
\subcaptionbox{$L_1$}{\begin{tikzpicture}[scale=0.7]
  \def\sides{7}
  \def\radius{3}

  \fill (0,0) circle (2.61pt);  
  \foreach \i in {1,...,\sides} {
    \fill ({360/\sides * \i-12.86}:\radius) circle (2.61pt);

    \draw ({360/\sides * (\i + 1)-12.86}:\radius) -- ({360/\sides * \i-12.86}:\radius);
  }
  \draw (0,0) -- ({360/\sides * 4-12.86}:\radius);
  \draw (0,0) -- ({360/\sides * 0-12.86}:\radius);
  
\end{tikzpicture}} \quad
\subcaptionbox{$L_2$}{\begin{tikzpicture}[scale=0.7]
  \def\sides{7}
  \def\radius{3}

  \fill (0,0) circle (2.61pt);
  \fill (0,-1.2) circle (2.61pt);
  \foreach \i in {1,...,\sides} {
    \fill ({360/\sides * \i-12.86}:\radius) circle (2.61pt);

    \draw ({360/\sides * (\i + 1)-12.86}:\radius) -- ({360/\sides * \i-12.86}:\radius);
  }
  \draw (0,0) -- ({360/\sides * 4-12.86}:\radius);
  \draw (0,0) -- ({360/\sides * 2-12.86}:\radius);
  \draw (0,0) -- ({360/\sides * 0-12.86}:\radius);
  \draw (0,0) -- (0,-1.2);
\end{tikzpicture}}
\end{center}
\caption{The graphs $L_1$ and $L_2$.}\label{fig:2Graphs} 
\end{figure}
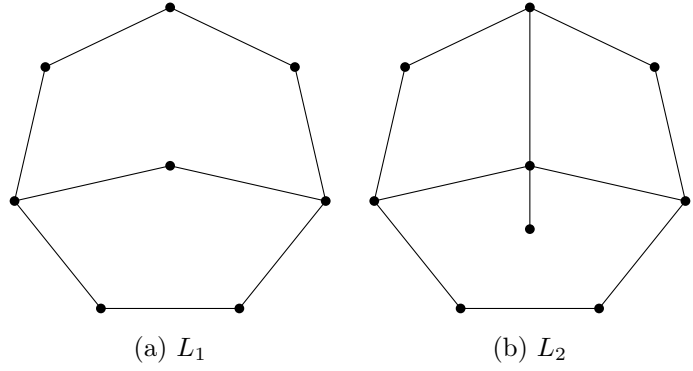

    For each $j\in \{0,1,2,3\}$ and a vertex $v$, if $v\in S_j$, then we call $v$ an $S_j$-vertex. Let $L_1$ be the graph shown in Fig.~\ref{fig:2Graphs} (a) (i.e., the graph consisting of an induced $C_7$ and an $S_2$-vertex). By running Algorithm~\ref{algo:genGraphs} using the parameters $k=4$, $n=\infty$, $\mathcal{F}=\{P_7,C_3\}$ and $H=L_1$, one obtains that there are exactly three 4-vertex-critical $\{P_7,C_3\}$-free graphs containing an induced $L_1$ (namely, graphs (d), (e) and (f) from Fig.~\ref{fig:7Graphs}). So we may assume that $S_2=\emptyset$. 
    Thus, $N(C)=S_1\cup S_3$.

    Let $L_2$ be the graph shown in Fig.~\ref{fig:2Graphs} (b) (i.e., the graph consisting of an induced $C_7$ and an $S_3$-vertex adjacent to an $S_0$-vertex). By running Algorithm~\ref{algo:genGraphs} using the parameters $k=4$, $n=\infty$, $\mathcal{F}=\{P_7,C_3\}$ and $H=L_2$, one obtains that there is exactly one 4-vertex-critical $\{P_7,C_3\}$-free graph containing an induced $L_2$ (namely, graph (e) from Fig.~\ref{fig:7Graphs}). So we may assume that for every $v\in S_0$, $N(v)\cap N(C)\subseteq S_1$.  
    However, if an $S_0$-vertex $v$ has a neighbor $u\in S_1(i)$, then $v-u-v_{i+1}-v_{i+2}-v_{i+3}-v_{i-3}-v_{i-2}$ is an induced $P_7$. 
    Thus, $N(S_0)\cap N(C)=\emptyset$. 
    It follows that $S_0=\emptyset$ (since $G$ is connected) and 
    \begin{equation}\label{eqn-1} 
        V(G)=V(C)\cup S_1\cup S_3.
    \end{equation}

    We call a graph $H$ a {\em heptagram} if $H$ can be partitioned into seven stable sets, say $\{H_1,H_2,\ldots,H_7\}$, such that for each $v\in H_i$, where $i\in [7]$, $v$ has a neighbor in $H_{i-1}$ and a neighbor in $H_{i+1}$ but $v$ has no neighbors in $H_{i\pm 2}\cup H_{i\pm 3}$. Let $H:=\{H_1,H_2,\ldots,H_7\}$ be an inclusionwise maximal induced heptagram in $G$. Since $G$ contains an induced $C_7$, such an $H$ exists. 

    \begin{claim}\label{claim:C_7 property in H}
        Every vertex in $H$ is contained in an induced $C_7$ in $H$.
    \end{claim}
    \begin{proof}
        Let $v$ be a vertex in $H_i$ for some $i\in [7]$. 
        By the definition of $H$, $v$ has a neighbor $v_{i+1}\in H_{i+1}$. 
        And similarly, $v_{i+1}$ has a neighbor $v_{i+2}\in H_{i+2}$, $v_{i+2}$ has a neighbor $v_{i+3}\in H_{i+3}$, $v_{i+3}$ has a neighbor $v_{i-3}\in H_{i-3}$, $v_{i-3}$ has a neighbor $v_{i-2}\in H_{i-2}$ and $v_{i-2}$ has a neighbor $v_{i-1}\in H_{i-1}$. 
        If $v$ is not adjacent to $v_{i-1}$, then $v-v_{i+1}-v_{i+2}-v_{i+3}-v_{i-3}-v_{i-2}-v_{i-1}$ is an induced $P_7$, a contradiction. 
        So we may assume that $vv_{i-1}\in E(G)$. 
        It follows that $v-v_{i+1}-v_{i+2}-v_{i+3}-v_{i-3}-v_{i-2}-v_{i-1}-v$ is an induced $C_7$ in $H$. 
        This completes the proof of Claim \ref{claim:C_7 property in H}.
    \end{proof}
    
    \begin{claim}\label{claim:complete property in H}
        For every $i\in [7]$, $H_i$ is complete to $H_{i-1}\cup H_{i+1}$.
    \end{claim}
    \begin{proof}
        Let $v_i$ be a vertex in $H_i$. 
        By symmetry, we only need to prove that $v_i$ is complete to $H_{i+1}$. 
        Suppose to the contrary that there is a vertex $u_{i+1}\in H_{i+1}$ with $v_iu_{i+1}\notin E(G)$. 
        By Claim \ref{claim:C_7 property in H}, let $v_i-v_{i+1}-v_{i+2}-v_{i+3}-v_{i-3}-v_{i-2}-v_{i-1}-v_i$ be an induced $C_7$ in $H$ containing $v_i$. 
        If $u_{i+1}v_{i+2}\in E(G)$, then $u_{i+1}-v_{i+2}-v_{i+3}-v_{i-3}-v_{i-2}-v_{i-1}-v_i$ is an induced $P_7$, a contradiction. 
        So we may assume that $u_{i+1}v_{i+2}\notin E(G)$. 
        By definition of $H$, let $u_{i+2}$ be a neighbor of $u_{i+1}$ in $H_{i+2}$. 
        If $v_{i+1}u_{i+2}\in E(G)$, then $v_{i-3}-v_{i-2}-v_{i-1}-v_i-v_{i+1}-u_{i+2}-u_{i+1}$ is an induced $P_7$, a contradiction. 
        So $v_{i+1}u_{i+2}\notin E(G)$. 
        By replacing $v_i, u_{i+1}$ with $v_{i+1}, u_{i+2}$, we obtain that $u_{i+2}$ is not adjacent to $v_{i+3}$ and $u_{i+2}$ has a neighbor $u_{i+3}\in H_{i+3}$ that is not adjacent to $v_{i+2}$. 
        By repeating this step for $i-3,i-2$ and $i-1$, we obtain $u_{i-3}\in H_{i-3}$, $u_{i-2}\in H_{i-2}$, $u_{i-1}\in H_{i-1}$ and $u_i\in H_i$ such that $\{u_{i-3},u_{i-2},u_{i-1},u_i\}$ is anti-complete to $\{v_i,v_{i+1},v_{i+2},v_{i+3},v_{i-3},v_{i-2},v_{i-1}\}$ and $u_{i+3}-u_{i-3}-u_{i-2}-u_{i-1}-u_i$ is an induced $P_5$. 
        Note that if $u_i$ is not adjacent to $u_{i+1}$, then $u_{i+1}-u_{i+2}-u_{i+3}-u_{i-3}-u_{i-2}-u_{i-1}-u_i$ is an induced $P_7$, a contradiction. 
        So we may assume that $u_iu_{i+1}\in E(G)$. 
        It follows that $u_i-u_{i+1}-u_{i+2}-u_{i+3}-u_{i-3}-u_{i-2}-u_{i-1}-u_i$ is an induced $C_7$ anti-complete to $v_i-v_{i+1}-v_{i+2}-v_{i+3}-v_{i-3}-v_{i-2}-v_{i-1}-v_i$. 
        Since $G$ is connected, there is an induced path $P=x-w_1-w_2-\ldots
        -w_\ell-y$ (note that $x=y$ is allowed) such that only $x$ has neighbors in $v_i-v_{i+1}-v_{i+2}-v_{i+3}-v_{i-3}-v_{i-2}-v_{i-1}-v_i$ and only $y$ has neighbors in $u_i-u_{i+1}-u_{i+2}-u_{i+3}-u_{i-3}-u_{i-2}-u_{i-1}-u_i$ (by considering an arbitrary shortest path between these two cycles). 
        By symmetry, we may assume that $x$ is adjacent to $v_j$ but not to $v_{j-1},v_{j-2}$ and $y$ is adjacent to $u_k$ but not to $u_{k+1}, u_{k+2}$ for some $j,k\in [7]$. 
        Then $v_{j-2}-v_{j-1}-v_j-x-w_1-w_2-\ldots-w_\ell-y-u_k-u_{k+1}-u_{k+2}$ is an induced path with length at least 7, a contradiction. 
        This completes the proof of Claim \ref{claim:complete property in H}. 
    \end{proof}

    Suppose there is a vertex $v\in V(G)\setminus V(H)$. 
    By (1) and Claim \ref{claim:complete property in H}, there is an index $i\in[7]$ such that the neighbors of $v$ in $H$ are either in (and only in) both $H_{i-1}$ and $H_{i+1}$ or in (and only in) both $H_{i-2}$, $H_{i}$ and $H_{i+2}$. 
    In the first case, we could add $v$ into $H_i$ to obtain a larger heptagram, which contradicts the maximality of $H$. 
    So we may assume that for every $v\in V(G)\setminus V(H)$, there is an index $i\in[7]$ such that the neighbors of $v$ in $H$ are in (and only in) both $H_{i-2}$, $H_{i}$ and $H_{i+2}$. 
    
    For each $i\in [7]$, let $T_i$ be the set containing those vertices $v \in V(G)\setminus V(H)$ for which the neighbors of $v$ in $H$ are in (and only in) both $H_{i-2}$, $H_{i}$ and $H_{i+2}$. 
    Thus, $V(G)\setminus V(H)=\bigcup_{i=1}^7 T_i$. 

    \begin{claim}\label{claim:T_i property}
        For every $i\in [7]$ and $v\in T_i$, $v$ is complete to $H_i$ and complete to at least one of $H_{i-2}$ and $H_{i+2}$. 
    \end{claim}
    \begin{proof}
        For every $j\in [7]$, let $v_{j}$ be a vertex in $H_{j}$. 
        Without loss of generality, we may assume that $vv_i,vv_{i-2},vv_{i+2}\in E(G)$. 
        Suppose first that there is a vertex $u_i\in H_i$ which is not adjacent to $v$. 
        By Claim \ref{claim:complete property in H}, $u_i-v_{i-1}-v_{i}-v-v_{i+2}-v_{i+3}-v_{i-3}$ is an induced $P_7$, a contradiction. 
        Then we suppose that there are $u_{i-2}\in H_{i-2}$ and $u_{i+2}\in H_{i+2}$ which are not adjacent to $v$. 
        By Claim \ref{claim:complete property in H} again, $u_{i-2}-v_{i-1}-v_{i-2}-v-v_{i+2}-v_{i+1}-u_{i+2}$ is an induced $P_7$, a contradiction. 
        This completes the proof of Claim \ref{claim:T_i property}.
    \end{proof}

    By Claim \ref{claim:T_i property}, $T_i$ is complete to $H_i$. 
    Since $G$ is triangle-free, $T_i$ is a stable set and is anti-complete to $T_{i\pm 2}$. 
    For a fixed $i\in [7]$, let $T_i^-$ be the set of vertices in $T_i$ that are complete to $H_{i-2}$ and $T_i^+=T_i\setminus T_i^-$. 
    By Claim \ref{claim:T_i property} again, $T_i^+$ is complete to $H_{i+2}$. 
    Since $G$ is triangle-free, $T_i^-$ is anti-complete to $T_{i+3}$ and $T_i^+$ is anti-complete to $T_{i-3}$. 

    Let $f:V(G)\rightarrow \{1,2,3\}$ be a function such that $f(v)=1$ for every $v\in H_{i-2}\cup H_i\cup H_{i+2}\cup T_{i-1}\cup T_{i+1}$, $f(v)=2$ for every $v\in H_{i-3}\cup H_{i-1}\cup T_{i-2}\cup T_{i+3}\cup T_{i}^-$ and $f(v)=3$ for every $v\in H_{i+1}\cup H_{i+3}\cup T_{i-3}\cup T_{i+2}\cup T_{i}^+$. 
    It follows that $f$ is a proper 3-coloring of $G$, see Fig.~\ref{fig:placeholder} as an illustration when $i=1$. 
    This completes the proof of Theorem \ref{thm:contain C_7}.

    \begin{figure}[h!]
    \centering

\scalebox{0.8}{
\begin{tikzpicture}[
    node distance=2.5cm,
    every node/.style={
        circle, 
        draw=black, 
        thick, 
        minimum size=12mm,
        font=\normalsize\sffamily,
        inner sep=1pt 
    },
    every edge/.style={
        draw, 
        very thick,  
        shorten >=1pt,  
        shorten <=1pt  
    },
    scale=1
]

\foreach \i in {0,1,...,6} {
    \pgfmathsetmacro{\angle}{90 - \i*360/7} 
    \pgfmathsetmacro{\x}{5*cos(\angle)}
    \pgfmathsetmacro{\y}{5*sin(\angle)}
    \node (v\i) at (\x, \y) {};
}

\foreach \i in {7,8,9,10,11,12,13} {
    \pgfmathsetmacro{\angle}{90 - (\i-7)*360/7} 
    \pgfmathsetmacro{\x}{2.5*cos(\angle)}
    \pgfmathsetmacro{\y}{2.5*sin(\angle)}
    \node (v\i) at (\x, \y) {};
}

\node[fill=red!70] at (v0) {$H_1$};
\node[fill=yellow!70] at (v1) {$H_2$};
\node[fill=red!70] at (v2) {$H_3$};
\node[fill=yellow!70] at (v3) {$H_4$};
\node[fill=blue!50] at (v4) {$H_5$};
\node[fill=red!70] at (v5) {$H_6$};
\node[fill=blue!50] at (v6) {$H_7$};

\fill[yellow!70] (v7) -- ++(0,0.6) arc (90:270:0.6) -- cycle;
\fill[blue!50] (v7) -- ++(0,0.6) arc (90:-90:0.6) -- cycle;

\draw [thick](v7) circle (0.6cm);

\node at (v7) [scale=0.9,draw=none] {$\{6,1,3\}$};

\node [fill=red!70] at (v8) [scale=0.9]{$\{7,2,4\}$};
\node [fill=yellow!70] at (v9) [scale=0.9]{$\{1,3,5\}$};
\node [fill=blue!50] at (v10) [scale=0.9]{$\{2,4,6\}$};
\node [fill=yellow!70] at (v11) [scale=0.9]{$\{3,5,7\}$};
\node [fill=blue!50] at (v12) [scale=0.9]{$\{4,6,1\}$};
\node [fill=red!70] at (v13) [scale=0.9]{$\{5,7,2\}$};

\draw [ultra thick](v1) -- (v2);
\draw [ultra thick](v3) -- (v2);
\draw [ultra thick](v3) -- (v4);
\draw [ultra thick](v4) -- (v5);
\draw [ultra thick](v5) -- (v6);
\draw [ultra thick](v6) -- (v0);
\draw [ultra thick](v1) -- (v0);


\draw[dashed] (v7) -- (v10);
\draw[dashed] (v7) -- (v11);

\draw[dashed] (v7) -- (v9);
\draw[dashed] (v8) -- (v10);
\draw[dashed] (v9) -- (v11);
\draw[dashed] (v10) -- (v12);
\draw[dashed] (v11) -- (v13);
\draw[dashed] (v12) -- (v7);
\draw[dashed] (v13) -- (v8);

\end{tikzpicture}
}
    \caption{Illustration of a proper 3-coloring of $G$. The dashed lines indicate sets that are anti-complete to each other. Each set in the middle with label $\{a,b,c\}$ indicates $T_b$. The yellow part of the set with label $\{6,1,3\}$ indicates that $T_1^+$ should be colored yellow and the blue part indicates that $T_1^-$ should be colored blue.}
    \label{fig:placeholder}
\end{figure}
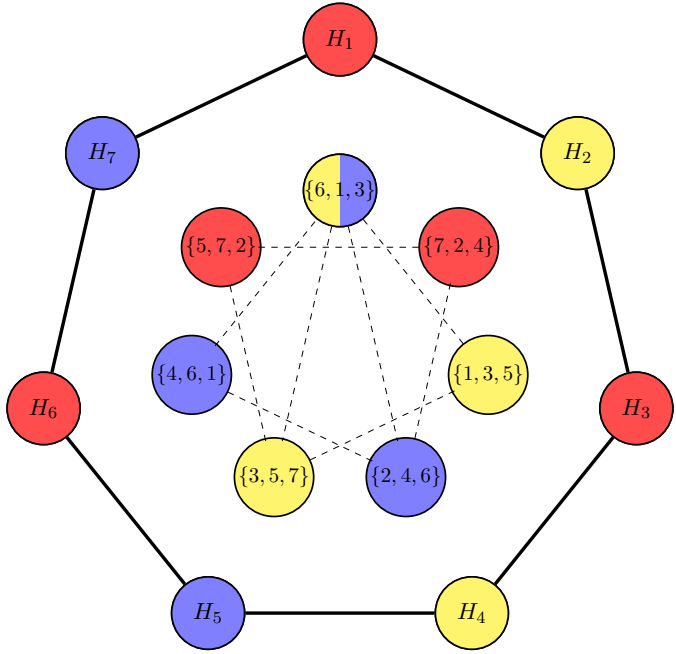
\end{proof}

Every graph that is not $3$-colorable must contain an odd cycle. When running Algorithm~\ref{algo:genGraphs} using the parameters $k=4$, $n=\infty$, $\mathcal{F}=\{P_5+P_1,C_3,C_7\}$ and $H=C_5$, the algorithm terminates without finding any $4$-vertex-critical graphs. We therefore obtain the following proposition.
\begin{proposition}
\label{prop:P5PlusP1C3C7Free}
All $\{P_5+P_1,C_3,C_7\}$-free graphs are $3$-colorable.
\end{proposition}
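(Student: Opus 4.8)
The plan is to prove the contrapositive through vertex-critical graphs: it suffices to show that there is \emph{no} $4$-vertex-critical $\{P_5+P_1,C_3,C_7\}$-free graph. Indeed, suppose some $\{P_5+P_1,C_3,C_7\}$-free graph $G$ were not $3$-colorable, so $\chi(G)\geq 4$. I would invoke the folklore fact that every graph of chromatic number at least $4$ contains an induced $4$-vertex-critical subgraph: since deleting a vertex changes $\chi$ by at most one and the empty graph has $\chi=0$, some induced subgraph has chromatic number exactly $4$, and a vertex-minimal such subgraph $G'$ is then $4$-vertex-critical. As the class of $\{P_5+P_1,C_3,C_7\}$-free graphs is hereditary, $G'$ is again $\{P_5+P_1,C_3,C_7\}$-free, so producing a contradiction reduces to ruling out such a $G'$.

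Next I would pin down which induced odd cycle a candidate $G'$ must contain. Since $\chi(G')=4$, the graph $G'$ is non-bipartite and hence contains an induced odd cycle $C_t$; because $G'$ is $C_3$-free and $C_7$-free, we have $t\in\{5,9,11,13,\dots\}$. The key observation is that every such cycle with $t\geq 9$ already contains an induced $P_5+P_1$: writing $C_t=u_1-u_2-\cdots-u_t-u_1$, the set $\{u_1,u_2,u_3,u_4,u_5\}$ induces a $P_5$, while $u_7$, whose only neighbors on the cycle are $u_6$ and $u_8$ with $u_8\neq u_1$ since $t\geq 9$, is anti-complete to this $P_5$, giving an induced $P_5+P_1$. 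As $G'$ is $P_5+P_1$-free, the case $t\geq 9$ is impossible, so $G'$ must contain an induced $C_5$.

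It then remains to show that no $4$-vertex-critical $\{P_5+P_1,C_3,C_7\}$-free graph contains an induced $C_5$. This is exactly what Algorithm~\ref{algo:genGraphs} certifies when run with parameters $k=4$, $n=\infty$, $\mathcal{F}=\{P_5+P_1,C_3,C_7\}$ and $H=C_5$: it terminates and outputs no graph, so no such critical graph exists. Combining the two steps, $G'$ cannot exist, contradicting the assumption, and therefore every $\{P_5+P_1,C_3,C_7\}$-free graph is $3$-colorable.

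The human part of the argument is short and routine --- the reduction to critical graphs and the one-line embedding of $P_5+P_1$ into long odd holes --- so the real content, and the main obstacle, is the exhaustive computer search: the whole argument hinges on the fact that Algorithm~\ref{algo:genGraphs} provably terminates (i.e.\ is able to prune every graph and thereby certify finiteness) for the chosen parameters, which I take as given from Section~\ref{sec:algo}. One point worth double-checking is that seeding the search with $H=C_5$ loses nothing; this is precisely guaranteed by the second paragraph, which shows that $C_5$ is the only induced odd cycle available in $G'$. Were longer induced odd cycles possible, additional seeds $H$ would have to be searched.
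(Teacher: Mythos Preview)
Your proof is correct and follows essentially the same approach as the paper: reduce to a $4$-vertex-critical graph, observe that the only possible induced odd cycle is $C_5$, and invoke the termination of Algorithm~\ref{algo:genGraphs} with $H=C_5$. The paper's own argument is terser and leaves the elimination of $C_t$ for $t\geq 9$ implicit, whereas you spell it out explicitly; otherwise the two are the same.
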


The three graphs from Theorem~\ref{thm:contain C_7} all contain an induced $P_5+P_1$. Hence, by combining Theorem~\ref{thm:contain C_7} and Proposition~\ref{prop:P5PlusP1C3C7Free} we obtain the following corollary.

\begin{corollary}
All $\{P_5+P_1,C_3\}$-free graphs are $3$-colorable.
\end{corollary}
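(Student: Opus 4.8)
The plan is to deduce the corollary directly from Theorem~\ref{thm:contain C_7} and Proposition~\ref{prop:P5PlusP1C3C7Free}, using the elementary observation that $P_5+P_1$ is an induced subgraph of $P_7$ (it is obtained from $P_7=w_1-w_2-\cdots-w_7$ by deleting $w_6$). Consequently every $P_5+P_1$-free graph is $P_7$-free, so every $\{P_5+P_1,C_3\}$-free graph is in particular $\{P_7,C_3\}$-free; this inclusion is what lets the two earlier results be brought to bear.

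First I would argue by contradiction through vertex-criticality. Suppose some $\{P_5+P_1,C_3\}$-free graph $G$ satisfies $\chi(G)\ge 4$, and let $G'$ be an induced subgraph of $G$ that is minimal (with respect to taking induced subgraphs) subject to $\chi(G')\ge 4$. Since deleting a single vertex lowers the chromatic number by at most one, minimality forces $\chi(G')=4$ and $\chi(G'-v)\le 3$ for every $v\in V(G')$; that is, $G'$ is $4$-vertex-critical. As being $\{P_5+P_1,C_3\}$-free is a hereditary property, $G'$ inherits it, and by the previous paragraph $G'$ is then a $4$-vertex-critical $\{P_7,C_3\}$-free graph.

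Next I would split into two cases according to whether $G'$ contains an induced $C_7$. If it does, Theorem~\ref{thm:contain C_7} forces $G'$ to be one of the graphs (d), (e) or (f) of Fig.~\ref{fig:7Graphs}; but each of these contains an induced $P_5+P_1$ (a finite check, already recorded in the remark preceding the statement), contradicting that $G'$ is $P_5+P_1$-free. If $G'$ has no induced $C_7$, then $G'$ is $\{P_5+P_1,C_3,C_7\}$-free, so Proposition~\ref{prop:P5PlusP1C3C7Free} gives $\chi(G')\le 3$, contradicting $\chi(G')=4$. Either way we reach a contradiction, so no such $G$ exists and every $\{P_5+P_1,C_3\}$-free graph is $3$-colorable.

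The proof has no genuinely hard step, since the heavy lifting was already done in Theorem~\ref{thm:contain C_7} and Proposition~\ref{prop:P5PlusP1C3C7Free}. The only points requiring care are the induced-subgraph inclusion $P_5+P_1\subseteq_{\mathrm{ind}}P_7$, the verification that each of graphs (d), (e), (f) contains an induced $P_5+P_1$, and the standard fact that any graph of chromatic number at least $4$ contains a $4$-vertex-critical induced subgraph. The main (and only mild) obstacle is thus to set up the case analysis cleanly so that both the $C_7$-containing and the $C_7$-free situations are covered, which the two cited results handle exactly.
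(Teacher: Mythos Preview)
Your proof is correct and follows essentially the same approach as the paper: combine Theorem~\ref{thm:contain C_7} with Proposition~\ref{prop:P5PlusP1C3C7Free}, using that the three graphs (d), (e), (f) each contain an induced $P_5+P_1$. You have simply made explicit the passage through a $4$-vertex-critical induced subgraph and the inclusion $P_5+P_1\subseteq_{\mathrm{ind}}P_7$ needed to invoke Theorem~\ref{thm:contain C_7}, which the paper leaves implicit.
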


\section{4-vertex-critical $\{4K_{2},C_3\}$-free graphs}\label{sec:infFam}
For any integer $k\geq 1$, let $G_k$ be the circulant graph with vertex set $\{v_1,v_2,\ldots,v_{3k+10}\}$ such that vertex $v_i$ has the following neighbors: $\{v_{i-1},v_{i+1}\}\cup \{v_{i+5},v_{i+8},\ldots, v_{i+5+3k}\}$. For example, Fig.~\ref{fig:G1} visualizes the graph $G_1$. For any two vertices $u,v\in V(G_k)$, let $M(u,v)$ be the set of common non-neighbors of $u$ and $v$.  
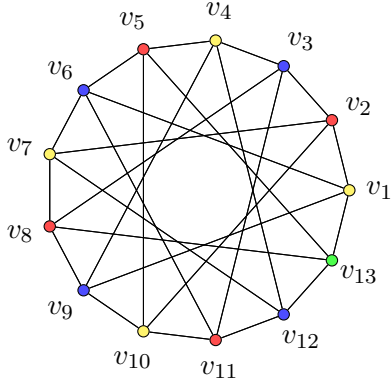
\begin{figure}[h!]
    \centering
    \begin{tikzpicture}[scale=1]
    \node[circle, draw, fill=yellow!70, inner sep=1.5pt] (v0) at (0:2) {};
    \node at (0:2.4) {$v_1$};
    
    \node[circle, draw, fill=red!70, inner sep=1.5pt] (v1) at (27.69:2) {};
    \node at (27.69:2.4) {$v_2$};
    
    \node[circle, draw, fill=blue!70, inner sep=1.5pt] (v2) at (55.38:2) {};
    \node at (55.38:2.4) {$v_3$};
    
    \node[circle, draw, fill=yellow!70, inner sep=1.5pt] (v3) at (83.08:2) {};
    \node at (83.08:2.4) {$v_4$};
    
    \node[circle, draw, fill=red!70, inner sep=1.5pt] (v4) at (110.77:2) {};
    \node at (110.77:2.4) {$v_5$};
    
    \node[circle, draw, fill=blue!70, inner sep=1.5pt] (v5) at (138.46:2) {};
    \node at (138.46:2.4) {$v_6$};
    
    \node[circle, draw, fill=yellow!70, inner sep=1.5pt] (v6) at (166.15:2) {};
    \node at (166.15:2.4) {$v_7$};
    
    \node[circle, draw, fill=red!70, inner sep=1.5pt] (v7) at (193.85:2) {};
    \node at (193.85:2.4) {$v_8$};
    
    \node[circle, draw, fill=blue!70, inner sep=1.5pt] (v8) at (221.54:2) {};
    \node at (221.54:2.4) {$v_9$};
    
    \node[circle, draw, fill=yellow!70, inner sep=1.5pt] (v9) at (249.23:2) {};
    \node at (249.23:2.4) {$v_{10}$};
    
    \node[circle, draw, fill=red!70, inner sep=1.5pt] (v10) at (276.92:2) {};
    \node at (276.92:2.4) {$v_{11}$};
    
    \node[circle, draw, fill=blue!70, inner sep=1.5pt] (v11) at (304.62:2) {};
    \node at (304.62:2.4) {$v_{12}$};
    
    \node[circle, draw, fill=green!70, inner sep=1.5pt] (v12) at (332.31:2) {};
    \node at (332.31:2.4) {$v_{13}$};

    \draw (v0) to (v1);
    \draw (v0) to (v12);
    \draw (v0) to (v5);
    \draw (v0) to (v8);
    
    \draw (v1) to (v2);
    \draw (v1) to (v0);
    \draw (v1) to (v6);
    \draw (v1) to (v9);
    
    \draw (v2) to (v3);
    \draw (v2) to (v1);
    \draw (v2) to (v7);
    \draw (v2) to (v10);
    
    \draw (v3) to (v4);
    \draw (v3) to (v2);
    \draw (v3) to (v8);
    \draw (v3) to (v11);
    
    \draw (v4) to (v5);
    \draw (v4) to (v3);
    \draw (v4) to (v9);
    \draw (v4) to (v12);
    
    \draw (v5) to (v6);
    \draw (v5) to (v4);
    \draw (v5) to (v10);
    \draw (v5) to (v0);
    
    \draw (v6) to (v7);
    \draw (v6) to (v5);
    \draw (v6) to (v11);
    \draw (v6) to (v1);
    
    \draw (v7) to (v8);
    \draw (v7) to (v6);
    \draw (v7) to (v12);
    \draw (v7) to (v2);
    
    \draw (v8) to (v9);
    \draw (v8) to (v7);
    \draw (v8) to (v0);
    \draw (v8) to (v3);
    
    \draw (v9) to (v10);
    \draw (v9) to (v8);
    \draw (v9) to (v1);
    \draw (v9) to (v4);
    
    \draw (v10) to (v11);
    \draw (v10) to (v9);
    \draw (v10) to (v2);
    \draw (v10) to (v5);
    
    \draw (v11) to (v12);
    \draw (v11) to (v10);
    \draw (v11) to (v3);
    \draw (v11) to (v6);
    
    \draw (v12) to (v0);
    \draw (v12) to (v11);
    \draw (v12) to (v4);
    \draw (v12) to (v7);
\end{tikzpicture}
    \caption{$G_1$ and a proper $4$-coloring of $G_1$.}
    \label{fig:G1}
\end{figure}

We show that there are infinitely many 4-vertex-critical $\{4K_2,C_3\}$-free graphs.
\begin{theorem}\label{P11}
    For each $k\geq 3$, $G_k$ is a 4-vertex-critical $\{4K_2,C_3\}$-free graph. 
\end{theorem}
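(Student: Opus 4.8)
The plan is to establish the four required properties of $G_k$ in turn, throughout exploiting that $G_k$ is the vertex-transitive circulant graph with connection set $\Delta:=\{\pm1\}\cup\{5,8,\dots,3k+5\}$, that $\Delta=-\Delta$, and — writing $n:=3k+10$, so $n\equiv1\pmod3$ — that every chord difference is $\equiv2\pmod3$ whereas $\pm1\equiv\pm1\pmod3$. Each vertex then has degree $|\Delta|=k+3$, so once triangle-freeness is known every edge $uv$ satisfies $|M(u,v)|=n-2(k+3)=k+4$. To show $G_k$ is $\{C_3\}$-free I would prove that $N(v_1)$ is independent, equivalently that $(d-d')\bmod n\notin\Delta$ for all distinct $d,d'\in\Delta$. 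I split into the cases where $d,d'$ are both in $\{\pm1\}$, one is in $\{\pm1\}$ and the other a chord, or both are chords; in each case reducing $d-d'$ to its representative in $[1,n-1]$ and reading off its residue modulo $3$ settles it, the point being that the chord differences are precisely the residues $\equiv2\pmod3$ in $[5,3k+5]$ and that the only multiple of $3$ in $\Delta$ is $n-1$.

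For $\chi(G_k)=4$ I argue both bounds. For $\chi(G_k)\le4$ I exhibit the proper $4$-colouring suggested by Fig.~\ref{fig:G1}: colour $v_i$ by $i\bmod3$. Since $n\equiv1\pmod3$ this is proper on every edge except the single seam edge $v_nv_1$ — a wrapping chord shifts the colour by $c-n\equiv1\pmod3$ and so never conflicts — and recolouring $v_n$ with a fourth colour repairs it. Deleting $v_n$ leaves the residue colouring proper, so $\chi(G_k-v_n)\le3$, and by vertex-transitivity $\chi(G_k-v)\le3$ for every $v$; together with $\chi(G_k)=4$ this is exactly $4$-vertex-criticality. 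For $\chi(G_k)\ge4$ I would show $\alpha(G_k)=k+3$: then $G_k$ is not $3$-colourable, since three independent sets covering its $n=3k+10$ vertices would give $3(k+3)\ge 3k+10$, which is false. The class $\{v_i:i\equiv1\pmod3\}$ is independent of size $k+3$, so $\alpha\ge k+3$. For $\alpha\le k+3$ the key fact is that the only non-edges whose difference is $\equiv2\pmod3$ are the pairs at cyclic distance $2$; hence in an independent set $S$ every pair lying in two distinct residue classes modulo $3$ has cyclic distance $2$, and as the distance-$2$ graph on $\mathbb{Z}_n$ is a disjoint union of cycles, the number $ab+bc+ca$ of such pairs (with $a,b,c$ the class sizes) is at most $|S|$. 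Solving $ab+bc+ca\le a+b+c$ confines $S$ to essentially one residue class, and a short finite check that a largest same-class independent set admits no enlargement yields $\alpha=k+3$.

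To prove $G_k$ is $\{4K_2\}$-free — which also gives $\{P_{11}\}$-freeness, as $4K_2$ is an induced subgraph of $P_{11}$ — I would use that any induced $4K_2$ contains an edge $uv$ whose remaining three edges lie in $M(u,v)$ and form an induced $3K_2$ there; thus it suffices to show that $G[M(u,v)]$ has induced matching number at most $2$ for every edge $uv$. For a short edge, computing $M(v_1,v_2)$ from the residue description of adjacency shows $G[M(v_1,v_2)]\cong P_4+kK_1$, of induced matching number $1$. For a chord $uv=v_1v_{1+c}$ I would note that $M(v_1,v_{1+c})$ consists of the vertices $v_j$ with $j\notin(1+\Delta)\cup(1+c+\Delta)$ and, again using that adjacency within $M$ is dictated by the residue of the difference, show that the edges of $G[M]$ are confined to a small cycle-with-pendants (for instance a $C_5$ with two pendants when $c=5$), which contains no three pairwise non-adjacent edges.

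The two steps I expect to be the main obstacles are the chord case of $\{4K_2\}$-freeness and the bound $\alpha\le k+3$. In the former, the graph $G[M(v_1,v_{1+c})]$ changes shape as $c$ ranges over $\{5,8,\dots,3k+5\}$, so the challenge is to describe its (few) edges uniformly in both $c$ and $k$ and to verify that an induced matching of size $3$ never appears; the reflection $v_i\mapsto v_{-i}$, which fixes $\Delta$ and pairs the chord $c$ with $n-c$, halves the number of cases. In the latter, the conceptual crux is the finite analysis ruling out a $(k+4)$-element independent set of shape $(k+3,1,0)$ — that is, verifying that no single vertex of one residue class can be adjoined to a maximum independent set drawn from another.
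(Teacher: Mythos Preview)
Your plan for $C_3$-freeness, for $\chi(G_k)\le 4$, and for vertex-criticality is the same as the paper's. The substantive differences lie in $\chi(G_k)\ge 4$ and in $4K_2$-freeness, and the first of these contains a genuine gap.

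For $\chi(G_k)\ge 4$ you go via $\alpha(G_k)\le k+3$, but the sentence ``hence in an independent set $S$ every pair lying in two distinct residue classes modulo $3$ has cyclic distance $2$'' is false. Because $n=3k+10\equiv 1\pmod 3$, a pair $v_i,v_j$ with $1\le i<j\le n$ in distinct residue classes can just as well have $j-i\equiv 1\pmod 3$, and there are many non-edge differences of that residue, namely every $d\in\{4,7,10,\ldots,3k+7\}$. Concretely, $v_3$ and $v_7$ lie in different residue classes, are non-adjacent, and have cyclic distance $4$. Hence the inequality $ab+bc+ca\le|S|$ has no justification, and your reduction to the single shape $(k+3,1,0)$ collapses. (A related slip: the class $\{v_i:i\equiv 1\pmod 3\}$ has $k+4$ elements and is \emph{not} independent, since $v_1v_n\in E(G_k)$; deleting $v_n$ gives the intended $(k+3)$-set.) The paper avoids the independence number entirely: assuming a proper $3$-colouring $c$, it picks three consecutive vertices with pairwise distinct colours and, by a short case check using only edges at differences $1,5,8,11$, proves $c(v_{i+3})=c(v_i)$; iterating gives $c(v_{i+3k+9})=c(v_i)$, contradicting $v_iv_{i+3k+9}\in E(G_k)$. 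This is both shorter and sidesteps the mod-$3$ asymmetry you ran into.

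For $4K_2$-freeness your strategy --- bound the induced matching number of $G[M(u,v)]$ by $2$ for every edge $uv$ --- is close in spirit to the paper's, but your structural description of $G[M(v_1,v_{1+c})]$ is not uniform in $k$: for $c=5$ it is a $C_5$ with two pendants only when $k=3$; already for $k=4$ the vertices $v_8$ and $v_{n-1}$ each have degree $4$ in $G[M]$. The paper organizes this differently. If the putative induced $4K_2$ contains some short edge $v_iv_{i+1}$, then $M(v_i,v_{i+1})\setminus\{v_{i+3},v_{i-2}\}$ is stable, so $G[M]$ is certainly $3K_2$-free. Otherwise all four edges of $H$ are chords; writing $M(v_i,v_{i+5+3j})=\{v_{i+2}\}\cup A\cup\{v_{i+3+3j},v_{i+7+3j}\}\cup B\cup\{v_{i-2}\}$ with $A,B$ stable and $A$ complete to $B$, the paper forces the six remaining vertices of $H$ to be a specific subset and then exhibits (using $k\ge 3$) an extra chord among them that spoils the $3K_2$. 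The extra hypothesis ``no short edge in $H$'' is exactly what keeps the chord case from blowing up with $k$, and is what your outline is missing.
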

\begin{proof}
    We start our proof by showing that $G_k$ is $\{4K_2,C_3\}$-free. 
    \begin{claim}\label{claim:P_11 C_3-free}
        For every $k\geq 3$, $G_k$ is $\{4K_2,C_3\}$-free. 
    \end{claim}
    \begin{proof}
        $G_k$ is clearly $C_3$-free because of the definition of $G_k$. Therefore, we only need to prove that $G_k$ is $4K_2$-free. 
        Suppose for the sake of obtaining a contradiction that $G_k$ contains an induced $4K_2$, say $H$.  
        We suppose first that $v_iv_{i+1}$ is an edge of $H$ for some $i$.  
        Note that $M(v_i,v_{i+1})=\{v_{i+3}\}\cup \{v_{i+4},v_{i+7},v_{i+10},\ldots, v_{i+3k+4}, v_{i+3k+7}\}\cup \{v_{i-2}\}$. 
        Since $H$ is a $4K_2$, $G[M(v_i,v_{i+1})]$ contains a $3K_2$. 
        However, note that $M(v_i,v_{i+1}) \setminus \{v_{i+3},v_{i-2}\} =\{v_{i+4},v_{i+7},v_{i+10},\ldots, v_{i+3k+4}, v_{i+3k+7}\}$ is a stable set and therefore $G[M(v_i,v_{i+1})]$ is $3K_2$-free, a contradiction. So we may assume that for any $i\in [3k+10]$, $v_iv_{i+1}$ is not an edge of $H$. 
        
        Suppose that $v_iv_{i+5+3j}$ is an edge of $H$, where $0\leq j\leq k$. 
        Note that  
	\begin{equation}
		\begin{aligned}
			M(v_i,v_{i+5+3j})=\{v_{i+2}\}\cup A\cup \{v_{i+3+3j}\} \cup \{v_{i+7+3j}\}\cup B\cup \{v_{i-2}\},
		\end{aligned}
	\end{equation}
	where $$A=\{v_{i+4},v_{i+7},v_{i+10},\ldots, v_{i+1+3j}\}, B=\{v_{i+9+3j},v_{i+12+3j},\ldots, v_{i+9+3(k-1)}\}.$$ 
	Note that $A=\emptyset$ if $j=0$ and $B=\emptyset$ if $j=k$. 
    Since $A$ and $B$ are both stable sets and $A$ is complete to $B$, $|V(H)\cap (A\cup B)|\leq 3$. 
	Thus, $|V(H)\cap \{v_{i+2},v_{i+3+3j},v_{i+7+3j},v_{i-2}\}|\geq 3$. 
	So by symmetry, we may assume that $v_{i+2},v_{i+7+3j}\in V(H)$. Note that $v_{i+2}$ is complete to $A\setminus \{v_{i+4}\}$ and $v_{i+7+3j}$ is complete to $B\setminus \{v_{i+9+3j}\}$. 
    It follows that $V(H)\cap (A\cup B)\subseteq \{v_{i+4},v_{i+9+3j}\}$. 
    On the other hand, since $|V(H)\cap M(v_i,v_{i+5+3j})|=6$, $|V(H)\cap (A\cup B)|\geq 2$. 
    Thus, $V(H)\cap (A\cup B)=\{v_{i+4},v_{i+9+3j}\}$. 
    So $V(H)\cap M(v_i,v_{i+5+3j})=\{v_{i+2}, v_{i+4}, v_{i+3+3j}, v_{i+7+3j}, v_{i+9+3j}, v_{i-2}\}$. 
    Note that $v_{i-2}v_{i+3+3j},v_{i+4}v_{i+9+3j}\in E(G_k)$. 
    Since $k\geq 3$, either $j\neq 1$ or $j\neq k-1$. 
    If $j\neq 1$, then $v_{i+3+3j}v_{i+4}\in E(G_k)$. 
    If $j\neq k-1$, then $v_{i-2}v_{i+9+3j}\in E(G_k)$. 
    Each case contradicts that $H$ is an induced $4K_2$. 
    This completes the proof of Claim \ref{claim:P_11 C_3-free}.
    \end{proof}

    Next, we show that every graph obtained by deleting one vertex from $G_k$ admits a proper 3-coloring. 
    \begin{claim}\label{claim:critical of G_k}
        For every $v\in V(G_k)$, $\chi(G_k\setminus \{v\})\leq 3$. 
    \end{claim}
    \begin{proof}
        Let $c: V(G_k\setminus \{v_1\}) \to \{0,1,2\}$ be a function such that $c(v_i)\equiv i\pmod{3}$. 
        We then prove that $c$ is a proper 3-coloring of $G_k\setminus \{v_1\}$. 
        Suppose to the contrary that there are two adjacent vertices $v_i,v_j\in V(G_k\setminus \{v_1\})$ with $i<j$ such that $c(v_i)=c(v_j)$.
        So $j-i \equiv 0\pmod{3}$. 
        By the definition of $G_k$, since $v_i$ and $v_j$ are adjacent, $j=i+1$ or $j=i+5+3\ell$ for some integer $\ell \geq 0$. 
        Hence, $j-i \equiv 1\pmod{3}$ or $j-i \equiv 2\pmod{3}$, which is a contradiction. 
        By the symmetry of $G_k$, for every $v\in V(G)$, $\chi(G\setminus \{v\})\leq 3$. 
        This completes the proof of Claim \ref{claim:critical of G_k}.
    \end{proof}
    By Claim \ref{claim:critical of G_k}, $\chi(G_k)\leq 4$. 
    Suppose now that there is a proper 3-coloring $c$ of $G_k$. 
	Since $\{v_1,v_2,v_3,v_4,v_5\}$ induces a $C_5$, $\chi(G_k)\geq 3$. 
	So there exists an index $i$ such that the colors of $v_i,v_{i+1},v_{i+2}$ are pairwise different. 
	By symmetry, we may assume that $c(v_i)=1,c(v_{i+1})=2$ and $c(v_{i+2})=3$. 
    \begin{claim}\label{claim:3-coloring contradict}
        The colors of $v_{i+1},v_{i+2},v_{i+3}$ are also pairwise different. 
        In particular, $c(v_{i+3})=c(v_i)$.
    \end{claim}
   \begin{proof}
       Suppose to the contrary that $c(v_{i+3})=2$. 
       So $c(v_{i+8})=3$. 
       If $c(v_{i+4})=1$, then $c(v_{i+9})=3$, which is a contradiction. 
       So we may assume that $c(v_{i+4})=3$. 
       Hence, $c(v_{i+9})=1$.  
       If $c(v_{i-1})=3$, then since $c(v_{i+3})=2$, we have $c(v_{i-2})=1$. 
       Then since $v_{i-2}v_{i+9}\in E(G)$ and both of these two vertices are colored 1, we obtain a contradiction.  
       So we may assume that $c(v_{i-1})=2$. But then $c(v_{i+7})=1$, $c(v_{i+6})=3$ and $c(v_{i-2})=1$, which leads again to a contradiction with $c(v_{i+9})=1$. 
       This completes the proof of Claim \ref{claim:3-coloring contradict}.
   \end{proof}
		
	By Claim \ref{claim:3-coloring contradict}, if $j\equiv i\pmod{3}$, then $c(v_j)=c(v_i)$. 
    Thus, $c(v_{i+3k+9})=c(v_{i})=1$, which is a contradiction. 
    It follows that for every $k\geq 3$, $G_k$ is 4-vertex-critical. 
    This completes the proof of Theorem \ref{P11}.
\end{proof}

Since $4K_2$ is an induced subgraph of $P_{11}$, we obtain the following corollary.

\begin{corollary}\label{cor:P11}
    For each $k\geq 3$, $G_k$ is a 4-vertex-critical $\{P_{11},C_3\}$-free graph. 
\end{corollary}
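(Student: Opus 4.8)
The plan is to derive this corollary directly from Theorem~\ref{P11}, which already carries out all of the substantive work: it establishes that $G_k$ is $4$-vertex-critical and $\{4K_2,C_3\}$-free for every $k \geq 3$. Since both the $4$-vertex-criticality and the $C_3$-freeness transfer verbatim, the only thing left to verify is that $G_k$ is additionally $P_{11}$-free.

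First I would record the elementary fact that $4K_2$ occurs as an induced subgraph of $P_{11}$. Writing the path as $u_1 - u_2 - \cdots - u_{11}$, the eight vertices $u_1,u_2,u_4,u_5,u_7,u_8,u_{10},u_{11}$ induce exactly four pairwise non-adjacent edges: each consecutive selected pair is an edge, while any two selected vertices belonging to different pairs are at distance at least $2$ along the path and hence non-adjacent. Thus these vertices induce a copy of $4K_2$. Next I would invoke the monotonicity of forbidden-induced-subgraph properties under the induced-subgraph relation: since this relation is transitive, any graph containing an induced $P_{11}$ must also contain an induced $4K_2$. Taking the contrapositive, the $4K_2$-freeness of $G_k$ (guaranteed by Theorem~\ref{P11}) implies the $P_{11}$-freeness of $G_k$. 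Combining this with the $C_3$-freeness and $4$-vertex-criticality from Theorem~\ref{P11} gives that $G_k$ is a $4$-vertex-critical $\{P_{11},C_3\}$-free graph.

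As for the main obstacle, there is none of real substance: all of the difficulty is concentrated in Theorem~\ref{P11}, and this corollary is a one-line consequence. The only point one must be careful not to invert is the direction of the monotonicity argument — it is being $4K_2$-free, the \emph{smaller} forbidden configuration, that forces being $P_{11}$-free, the \emph{larger} one, rather than the reverse. Once the explicit embedding of $4K_2$ into $P_{11}$ is exhibited, the statement follows immediately.
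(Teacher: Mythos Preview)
Your proposal is correct and follows exactly the paper's approach: the paper simply notes that $4K_2$ is an induced subgraph of $P_{11}$ and deduces the corollary immediately from Theorem~\ref{P11}. Your version just spells out the embedding and the monotonicity direction explicitly, which is fine.
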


This settles the claim from the introduction that $g(t)=5$ for all $t \geq 11$. We remark that it is not possible to replace $P_{11}$ by $P_{10}$ in Corollary~\ref{cor:P11}, since the set $\{v_{1}, v_{9}, v_{3k+5}, v_3, v_{11}, v_{3k+7}, v_5, v_{13}, v_{3k+9}, v_7\}$ induces a $P_{10}$ in $G_k$ for each $k \geq 4$.



    \section{Conclusion}
In this paper, we presented several contributions for the $3$-coloring problem on $\{F,C_3\}$-free graphs, where $F$ is an induced subgraph of a path. More precisely, we showed that there are exactly three 4-vertex-critical $\{P_7,C_3\}$-free graphs containing an induced $C_7$, that every $\{P_5+P_1,C_3\}$-free graph is $3$-colorable and that there is an infinite family of 4-vertex-critical $\{4K_2,C_3\}$-free (and hence also $P_{11}$-free) graphs. The following natural question sparks our interest:
\begin{question}
\label{ques}
What is the largest integer $t$ for which there are only finitely many 4-vertex-critical $\{P_t,C_3\}$-free graphs?
\end{question}

\subsection*{Acknowledgements}

\noindent  
We acknowledge the support the joint FWO-NSFC
scientific mobility project with grant number VS01224N. The research of Jan Goedgebeur was supported by Internal Funds of KU Leuven and an FWO grant with grant number G0AGX24N. Jorik Jooken is supported by a Postdoctoral Fellowship of the Research Foundation Flanders (FWO) with grant number 1222524N. We are grateful to Ben Cameron for making us aware of the result by Kami\'{n}ski and Pstrucha~\cite{KP19} and for suggesting Question~\ref{ques} in the last section.


\end{document}